\NeedsTeXFormat{LaTeX2e}
[1994/12/01]
\documentclass{ijmart-mod}
\chardef\bslash=`\\ 





\hfuzz1pc 

\usepackage{graphicx}
\usepackage[breaklinks=true]{hyperref}
\usepackage{mathtools}


\newtheorem{thm}{Theorem}[section]
\newtheorem{cor}[thm]{Corollary}
\newtheorem{lem}[thm]{Lemma}

\theoremstyle{definition}

\theoremstyle{remark}



\newcommand{\eval}[2][\right]{\relax
  \ifx#1\right\relax \left.\fi#2#1\rvert}




\begin{document}
\title{The diameter of lattice zonotopes}

\author[Antoine Deza]{Antoine Deza}
\address{McMaster University, Hamilton, Ontario, Canada}
\email{deza@mcmaster.ca} 

\author[Lionel Pournin]{Lionel Pournin}
\address{Universit{\'e} Paris 13, Villetaneuse, France}
\email{lionel.pournin@univ-paris13.fr}

\author[Noriyoshi Sukegawa]{Noriyoshi Sukegawa}
\address{Tokyo University of Science, Katsushika-ku, Japan}
\email{sukegawa@rs.tus.ac.jp}

\begin{abstract}
We establish sharp asymptotic estimates for the diameter of primitive zonotopes when their dimension is fixed. We also prove that, for infinitely many integers $k$, the largest possible diameter of a lattice zonotope contained in the hypercube $[0,k]^d$ is uniquely achieved by a primitive zonotope. As a consequence, we obtain that this largest diameter grows like $k^{d/(d+1)}$ up to an explicit multiplicative constant, when $d$ is fixed and $k$ goes to infinity, providing a new lower bound on the largest possible diameter of a lattice polytope contained in $[0,k]^d$.
\end{abstract}
\maketitle

\section{Introduction}\label{sec.DPS.0}

A polytope contained in $\mathbb{R}^d$ is called a lattice polytope when all of its vertices belong to the lattice $\mathbb{Z}^d$. These objects appear in a variety of contexts as, for instance in combinatorial optimization \cite{DelPiaMichini2016,Naddef1989,NillZiegler2011,Sebo1999}, in discrete geometry \cite{AcketaZunic1995,BaranyLarman1998,BaranyProdromou2006,BeckRobins2015,BrionVergne1997,LagariasZiegler1991}, or in combinatorics \cite{BaranyKantor2000,BlancoSantos2018,BlancoSantos2019,Kantor1999,SantosValino2018}. In order to investigate their extremal properties, the lattice polytopes contained in compact convex sets of growing size, such as balls \cite{BaranyLarman1998}, squares \cite{AcketaZunic1995,Thiele1991}, hypercubes \cite{DelPiaMichini2016,DezaManoussakisOnn2018}, or arbitrary $2$-dimensional compact convex sets \cite{BaranyProdromou2006} are often considered. For instance, the largest possible number of vertices $\phi(2,k)$ of a lattice polygon contained in the square $[0,k]^2$ is known to behave as
\begin{equation}\label{DPS.eq.1}
\phi(2,k)\sim\frac{12}{(2\pi)^{2/3}}k^{2/3}
\end{equation}
when $k$ goes to infinity \cite{AcketaZunic1995,Thiele1991}. In higher dimension, a similar result can be obtained from \cite{Andrews1963} and from \cite{BaranyLarman1998}. More precisely, the largest possible number of vertices $\phi(d,k)$ of a lattice polytope contained in the hypercube $[0,k]^d$ grows like $k^{d(d-1)/(d+1)}$ up to a multiplicative term that only depends on $d$. Note that no expression is known for this multiplicative term when $d\geq3$.

Another quantity that has attracted attention, due to its connection with the complexity of the simplex algorithm \cite{KalaiKleitman1992,Santos2012,Sukegawa2019,Sukegawa2017}, is the largest diameter $\delta(d,k)$ a lattice polytope contained in $[0,k]^d$ can have \cite{DelPiaMichini2016,DezaManoussakisOnn2018,DezaPournin2018,KleinschmidtOnn1992,Naddef1989}. Here, by the diameter of a polytope, we mean the diameter of the graph made of its vertices and edges. Since a polygon with $n$ vertices has diameter $\lfloor{n/2}\rfloor$, estimating the asymptotic behavior of $\phi(d,k)$ and $\delta(d,k)$ when $k$ goes to infinity can be considered two generalizations of (\ref{DPS.eq.1}) to higher dimensions. It is shown in \cite{Naddef1989} that $\delta(d,1)=d$, in \cite{DelPiaMichini2016} that $\delta(d,2)=\lfloor3d/2\rfloor$, in \cite{DezaPournin2018} that
$$
\delta(d,k)\leq{kd-\left\lceil\frac{2d}{3}\right\rceil-(k-3)}\mbox{ when }k\geq3\mbox{,}
$$
and in \cite{DezaManoussakisOnn2018} that
\begin{equation}\label{DPS.eq.2}
\delta(d,k)\geq\left\lfloor\frac{(k+1)d}{2}\right\rfloor\mbox{ when }k<2d\mbox{.}
\end{equation}

This lower bound on $\delta(d,k)$ is obtained using a particular family of lattice zonotopes, referred to as \emph{primitive zonotopes}. Recall that a zonotope is the Minkowski sum of pairwise non-collinear line segments, which we call its generators. Informally, primitive zonotopes are generated by the shortest possible lattice segments. In particular these segments themselves are primitive in the sense that the only lattice points they contain are their extremities. A formal definition of primitive zonotopes will be given in Section \ref{sec.DPS.1}. 
In this paper, we provide the asymptotic diameter of the primitive zonotopes defined in \cite{DezaManoussakisOnn2018} when their dimension is fixed while the number of their generators goes to infinity. We also show that, for infinitely-many integers $k$, the largest possible diameter $\delta_z(d,k)$ of a lattice zonotope contained in the hypercube $[0,k]^d$ is uniquely achieved by a primitive zonotope. As a first consequence, we partially answer the following question, posed by G{\"u}nter Rote: \emph{how can one compute $\delta_z(d,k)$}? In addition, we establish the following asymptotic estimate for $\delta_z(d,k)$. In the statement of this result, $\zeta$ stands for Riemann's zeta function.
\begin{thm}\label{DPS.thm.main.1}
For any fixed $d$, the largest possible diameter of a lattice zonotope contained in the hypercube $[0,k]^d$ satisfies
$$
\displaystyle\delta_z(d,k)\sim c(d)k^\frac{d}{d+1}\mbox{,}
$$
when $k$ goes to infinity, where $\displaystyle{c(d)=\left(\frac{2^d(d+1)^d}{2\,d!\zeta(d)}\right)^\frac{1}{d+1}}$.
\end{thm}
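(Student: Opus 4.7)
The plan is to establish matching upper and lower bounds on $\delta_z(d,k)$ with leading constant $c(d)$. I will exploit the fact that the diameter of a zonotope coincides with the number of pairwise non-parallel generators, and then balance that number against the hypercube constraint via a primitive-vector counting argument.

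For the lower bound, I would consider the primitive zonotope $Z_d(s)$ whose generators are all primitive lattice vectors $v\in\mathbb{Z}^d$ with $\|v\|_1\leq s$, taking exactly one vector from each pair $\{v,-v\}$. After translation, $Z_d(s)$ sits inside $[0,k(s)]^d$ where $k(s)$ is its width along any coordinate axis. Its diameter equals the number of generators, and by the standard Möbius-inversion identity giving density $1/\zeta(d)$ for primitive lattice vectors, combined with the volume $2^d s^d/d!$ of the $\ell_1$-ball of radius $s$, this count is asymptotic to $2^{d-1}s^d/(d!\,\zeta(d))$. Invariance of the generator set under permutations and sign changes of coordinates forces $k(s)$ to equal $(1/d)\sum_i\|v_i\|_1$. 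Integrating $\|v\|_1$ over the $\ell_1$-ball of radius $s$ and applying the primitive density yields $k(s)\sim 2^{d-1}s^{d+1}/((d+1)!\,\zeta(d))$. Inverting this to express $s$ in terms of $k$ and substituting into the diameter formula gives, after a direct simplification of the exponents, the asymptotic lower bound $\delta_z(d,k)\geq c(d)k^{d/(d+1)}(1+o(1))$.

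For the upper bound, suppose $Z$ is any lattice zonotope contained in $[0,k]^d$ with primitive generators $v_1,\ldots,v_n$, one from each parallel class. The width of $Z$ in coordinate direction $j$ equals $\sum_i|v_i^{(j)}|$, which is at most $k$. Summing over the $d$ coordinates yields the single constraint $\sum_i\|v_i\|_1\leq dk$. The number $n$ of generators is therefore maximized by selecting primitive vectors in increasing order of their $\ell_1$-norm. Since the cumulative $\ell_1$-mass of the primitive vectors in an $\ell_1$-ball of radius $s$ is asymptotic to the same integral computed above, this greedy choice is essentially $Z_d(s)$ for the unique $s$ with $\sum_i\|v_i\|_1=dk$; by the computation in the previous paragraph, the resulting upper bound on $n$ matches the lower bound to leading order.

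The main obstacle is making the upper bound argument fully rigorous. One must justify that a non-greedy choice of generators cannot exceed the greedy count by more than a lower-order term, which amounts to controlling the exchange of a long primitive vector for several short ones within the $\ell_1$-budget. In addition, one must verify that the first-order asymptotics for the number of primitive vectors and for the sum of their $\ell_1$-norms over balls of radius $s\to\infty$ both hold with a uniform error small enough to survive the inversion $s\mapsto k$, and the multiplicative constants must then be tracked through that inversion to confirm the precise value of $c(d)$. This constant-chasing, while routine, is where the delicate bookkeeping lies.
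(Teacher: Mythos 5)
Your proposal follows essentially the same route as the paper: your $Z_d(s)$ is the paper's $H_1(d,s)$, the budget constraint $\sum_i\lVert v_i\rVert_1\le dk$ together with the greedy selection of shortest primitive vectors is exactly the content of Theorem~\ref{sec.DPS.2.lem.2}, and the asymptotics for the generator count and for $k(s)$ are Theorems~\ref{sec.DPS.1.thm.1} and~\ref{thm.DPS.1}. The obstacle you flag is not a real one: replacing each of the $n$ chosen directions one-for-one by the $n$ primitive vectors of smallest $1$-norm shows the greedy count is an \emph{exact} upper bound (no lower-order loss to control), and the error terms in the inversion are absorbed by sandwiching $\delta(Z)$ between $\delta(H_1(d,p))$ and $\delta(H_1(d,p+1))$ and using $\delta(H_1(d,p+1))/\delta(H_1(d,p))\to1$, as the paper does.
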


Theorem \ref{DPS.thm.main.1} can be thought of as a generalization of (\ref{DPS.eq.1}) to zonotopes of arbitrary dimension because, as mentioned above, the number of vertices of a polygon is roughly twice its diameter. In particular, $c(2)$ is half the multiplicative constant in (\ref{DPS.eq.1}). Theorem \ref{DPS.thm.main.1} also immediately provides a lower bound on $\delta(d,k)$ similar to (\ref{DPS.eq.2}), except that it is valid when $k$ goes to infinity.

\begin{cor}\label{DPS.cor.main.1}
For any fixed $d$, $\displaystyle\delta(d,k)\geq c(d)k^\frac{d}{d+1}+o(1)$.
\end{cor}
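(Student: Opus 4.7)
The plan is to deduce the corollary directly from \thmref{DPS.thm.main.1} via the elementary fact that every lattice zonotope is a lattice polytope.

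First, I would check that a lattice zonotope $Z\subset[0,k]^d$ is in particular a lattice polytope contained in $[0,k]^d$. Since $Z$ is a Minkowski sum of finitely many lattice segments, each of its vertices is obtained by summing one endpoint of each generator, and hence belongs to $\mathbb{Z}^d$. Consequently, the family of lattice zonotopes contained in $[0,k]^d$ is a subfamily of the lattice polytopes contained in $[0,k]^d$, and taking maxima of diameters over each family yields
\[
\delta(d,k)\geq\delta_z(d,k)
\]
for every positive integer $k$.

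Next, I would invoke \thmref{DPS.thm.main.1}, which gives $\delta_z(d,k)\sim c(d)k^{d/(d+1)}$ as $k$ tends to infinity. Combining this asymptotic estimate with the inequality above immediately yields the announced lower bound on $\delta(d,k)$. There is no substantive obstacle here: the corollary is a one-line consequence of the theorem once the inclusion of lattice zonotopes into lattice polytopes is observed. The only detail worth a sentence of justification is that Minkowski sums of lattice segments have lattice vertices, which is standard and immediate from the description of vertices of a zonotope as sums of endpoints of its generators.
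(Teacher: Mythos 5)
Your argument is correct: a lattice zonotope contained in $[0,k]^d$ is in particular a lattice polytope contained in $[0,k]^d$, so $\delta(d,k)\geq\delta_z(d,k)$, and Theorem~\ref{DPS.thm.main.1} then gives the claimed bound (up to the same harmless imprecision already present in the paper, namely that $\delta_z(d,k)\sim c(d)k^{d/(d+1)}$ yields an additive error $o\bigl(k^{d/(d+1)}\bigr)$ rather than the literal $o(1)$ of the statement). The route differs somewhat from the paper's, however. The paper obtains Corollary~\ref{DPS.cor.main.1} from Corollary~\ref{cor.DPS.1} together with the observation that $k\mapsto\delta(d,k)$ is increasing: the primitive zonotope $H_1(d,p)$ alone witnesses $\delta\bigl(d,k(H_1(d,p))\bigr)\geq\delta\bigl(H_1(d,p)\bigr)\sim c(d)\,k\bigl(H_1(d,p)\bigr)^{d/(d+1)}$, and monotonicity interpolates to all intermediate values of $k$. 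That derivation uses only the lower-bound half of the analysis---the explicit construction and its asymptotics---and is available before, and independently of, the upper-bound argument of Section~\ref{sec.DPS.3}. Your derivation instead invokes the full asymptotic equivalence $\delta_z(d,k)\sim c(d)k^{d/(d+1)}$, hence also the harder upper bound on $\delta_z(d,k)$ via Theorem~\ref{sec.DPS.2.lem.2}. This is logically sound, since Theorem~\ref{DPS.thm.main.1} is proved without reference to the corollary, and it matches the framing in the introduction; it buys brevity and avoids tracking the special values $k(H_1(d,p))$, at the cost of resting the corollary on strictly more machinery than the paper's one-line proof requires.
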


It is conjectured in \cite{DezaManoussakisOnn2018} that $\delta(d,k)$ is achieved for all $d$ and $k$ by a lattice zonotope generated by primitive segments. Hence, Corollary \ref{DPS.cor.main.1} conjecturally provides the correct asymptotic estimate for $\delta(d,k)$.

The asymptotic diameter of primitive zonotopes will be established in Section~\ref{sec.DPS.1}. The proof that the unique lattice zonotope of diameter $\delta_z(d,k)$ contained in the hypercube $[0,k]^d$ is a primitive zonotope for infinitely-many values of $k$ is given in Section \ref{sec.DPS.2}. Finally, Theorem \ref{DPS.thm.main.1} is proven in Section \ref{sec.DPS.3}.

\section{The asymptotic diameter of primitive zonotopes}\label{sec.DPS.1}

We first recall the formal definition of primitive zonotopes \cite{DezaManoussakisOnn2018}. Call a point in the lattice $\mathbb{Z}^d$ \emph{primitive} when it is not equal to $0$ and the greatest common divisor of its coordinates is equal to $1$. In other words, the line segment that connects the origin of $\mathbb{R}^d$ to such a point is primitive in the sense given in the previous section. The set of the primitive points contained in $\mathbb{Z}^d$ will be denoted by $\mathbb{P}^d$ in the sequel. We will also refer to the $d$-dimensional ball of radius $p$ centered in $0$ for the $q$-norm as $B_q(d,p)$.

A first family of primitive zonotopes, denoted by $H_q(d,p)$, is defined in \cite{DezaManoussakisOnn2018} as the lattice zonotopes whose generators are the segments incident to $0$ on one end and to a point in $\mathbb{P}^d\cap{B_q(d,p)}$ whose first non-zero coordinate is positive on the other. Another family, referred to as $H_q^+(d,p)$ are the lattice zonotopes whose generators are the generators of $H_q(d,p)$ contained in the positive orthant $[0,+\infty[^d$. A useful property of zonotopes is that their diameter is equal to the number of their generators \cite{Ziegler1995}. Therefore, in order to determine the diameter of $H_q(d,p)$ and $H_q^+(d,p)$, we only need to count the primitive points in $B_q(d,p)$ at the extremity of their generators. 
For instance, $H_1(d,2)$ has diameter $d^2$. In this section we provide the asymptotic diameter of both $H_q(d,p)$ and $H_q^+(d,p)$ for any fixed $d$ and $q$, when $p$ goes to infinity; that is, when the radius of the ball the generators of these zonotopes are picked from grows large.

It is well known that the density of the primitive points in the lattice is $1/\zeta(d)$ \cite{HardyWright1938,KranakisPocchiola1994,Nymann1970}. The following result is proven in \cite{KranakisPocchiola1994} (see also the remark in Page~4 of \cite{BaranyMartinNaslundRobins2015}). In the statement of this result, $C$ is any convex compact subset of $\mathbb{R}^d$ that contains the origin and whose interior is non-empty and $\mathrm{vol}(pC)$ stands for the volume of the dilation of $C$ by a coefficient $p$.

\begin{lem}\label{sec.DPS.1.lem.1}
$\displaystyle\lim_{p\to\infty}\frac{\left|pC\cap\mathbb{P}^d\right|}{\mathrm{vol}(pC)}=\frac{1}{\zeta(d)}$.
\end{lem}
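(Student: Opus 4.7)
The plan is to reduce the count of primitive lattice points in $pC$ to the count of all nonzero lattice points in a family of dilates of $C$ via Möbius inversion, and then to feed in the classical Gauss-type estimate for lattice-point counts in a convex body. This is the usual path to such density statements and it fits naturally with the fact that the sum $\sum_m \mu(m)/m^d = 1/\zeta(d)$ is exactly what must appear in the answer.

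First, I would stratify the nonzero lattice points by the greatest common divisor of their coordinates. Every $x \in \mathbb{Z}^d \setminus \{0\}$ decomposes uniquely as $x = m y$ with $m \geq 1$ and $y \in \mathbb{P}^d$. Since $C$ is convex and contains the origin, multiplication by $1/m$ is a bijection between $pC \cap m\mathbb{P}^d$ and $(p/m) C \cap \mathbb{P}^d$, so
$$\bigl|pC \cap (\mathbb{Z}^d \setminus \{0\})\bigr| = \sum_{m \geq 1} \bigl|(p/m) C \cap \mathbb{P}^d\bigr|,$$
and this sum is effectively finite because $(p/m)C$ contains no nonzero lattice point once $p/m$ is below some constant $r$ depending only on $C$. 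Möbius inversion then gives
$$\bigl|pC \cap \mathbb{P}^d\bigr| = \sum_{m \geq 1} \mu(m) \, \bigl|(p/m) C \cap (\mathbb{Z}^d \setminus \{0\})\bigr|.$$

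Second, I would invoke the standard lattice-point estimate
$$\bigl|tC \cap \mathbb{Z}^d\bigr| = t^d \mathrm{vol}(C) + O(t^{d-1}), \qquad t \geq 1,$$
with an implicit constant depending only on $C$; this is proved by tiling $\mathbb{R}^d$ with unit cubes centered at the lattice and bounding the boundary contribution by the surface area of $tC$. Plugging this into the Möbius sum above isolates a main term
$$p^d\, \mathrm{vol}(C) \sum_{m \geq 1} \frac{\mu(m)}{m^d} = \frac{p^d\, \mathrm{vol}(C)}{\zeta(d)} = \frac{\mathrm{vol}(pC)}{\zeta(d)},$$
together with an error controlled by $\sum_{m \leq p/r} (p/m)^{d-1}$, which is $O(p^{d-1})$ for $d \geq 3$ and $O(p \log p)$ for $d = 2$. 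In either case the error is $o(p^d)$, so dividing by $\mathrm{vol}(pC)$ yields the claimed limit.

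The main obstacle is the case $d = 2$, where the tail of the Möbius sum produces a logarithmic factor and one has to verify that the total error is still $o(p^2)$; this is tight but works. A secondary technical point is that the estimate $|tC \cap \mathbb{Z}^d| = t^d\mathrm{vol}(C) + O(t^{d-1})$ is only meaningful for $t$ bounded away from $0$, so one must truncate the $m$-sum at $m \leq p/r$ and absorb the few remaining small-$t$ terms, each contributing $O(1)$, into the error. Once these uniformity issues are handled, the interchange of summation and the substitution of the asymptotic expansion are straightforward because the sum over $m$ is finite for each fixed $p$.
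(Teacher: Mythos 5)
The paper does not prove this lemma at all; it imports it directly from \cite{KranakisPocchiola1994}, so there is no in-paper argument to compare against. Your proposal is the standard (and correct) proof of that cited result: the M\"obius inversion $\left|pC\cap\mathbb{P}^d\right|=\sum_{m\geq1}\mu(m)\left|(p/m)C\cap(\mathbb{Z}^d\setminus\{0\})\right|$ is valid because the sum is finite for compact $C$, the convex-body estimate $\left|tC\cap\mathbb{Z}^d\right|=t^d\mathrm{vol}(C)+O(t^{d-1})$ is uniform for $t\geq1$ with constant depending only on $C$, and your accounting of the error --- $O(p^{d-1})$ for $d\geq3$, $O(p\log p)$ for $d=2$, plus $O(p)$ from the terms with $p/m<1$ and from truncating $\sum\mu(m)/m^d$ --- is all $o(p^d)$, as needed. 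The only cosmetic remark is that the scaling bijection $pC\cap m\mathbb{P}^d\leftrightarrow(p/m)C\cap\mathbb{P}^d$ needs neither convexity nor $0\in C$; those hypotheses play no role in this step.
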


It is also well known that
\begin{equation}\label{sec.DPS.1.eq.1}
\mathrm{vol}(B_q(d,p))=\frac{\left(2\Gamma\!\left(\frac{1}{q}+1\right)\!p\right)^d}{\Gamma\!\left(\frac{d}{q}+1\right)}\mbox{,}
\end{equation}
where $\Gamma$ denotes Euler's gamma function.

In the remainder of the article, we refer to the diameter of a polytope $P$ as $\delta(P)$. The following is obtained by combining (\ref{sec.DPS.1.eq.1}) with Lemma \ref{sec.DPS.1.lem.1}.

\begin{thm}\label{sec.DPS.1.thm.1}
$\displaystyle\lim_{p\to\infty}\frac{\delta\!\left(H_q(d,p)\right)}{p^d}=\frac{\left(2\Gamma\!\left(\frac{1}{q}+1\right)\right)^d}{2\Gamma\!\left(\frac{d}{q}+1\right)\!\zeta(d)}$.
\end{thm}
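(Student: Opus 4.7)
The plan is to unpack the diameter of $H_q(d,p)$ into a count of lattice points and then apply Lemma~\ref{sec.DPS.1.lem.1} to the unit ball $B_q(d,1)$.

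First, I would use the fact recalled in the excerpt that the diameter of a zonotope equals the number of its generators. By construction, the generators of $H_q(d,p)$ are in bijection with the set of primitive points in $B_q(d,p)$ whose first non-zero coordinate is positive. Next, I would exploit the symmetry $x \mapsto -x$: since $0 \notin \mathbb{P}^d$ and a point $x$ is primitive if and only if $-x$ is, and since $B_q(d,p)$ is symmetric about the origin, the primitive points in $B_q(d,p)$ partition into pairs $\{x,-x\}$, exactly one of which has positive first non-zero coordinate. Hence
$$
\delta(H_q(d,p)) = \frac{|B_q(d,p) \cap \mathbb{P}^d|}{2}.
$$

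Second, I would apply Lemma~\ref{sec.DPS.1.lem.1} with $C = B_q(d,1)$, noting that this is a convex compact subset of $\mathbb{R}^d$ containing the origin with non-empty interior, and that $B_q(d,p) = p\,B_q(d,1)$. This yields
$$
|B_q(d,p) \cap \mathbb{P}^d| \sim \frac{\mathrm{vol}(B_q(d,p))}{\zeta(d)}
$$
as $p \to \infty$. Combining the two displays and dividing by $p^d$ gives
$$
\frac{\delta(H_q(d,p))}{p^d} \longrightarrow \frac{\mathrm{vol}(B_q(d,1))}{2\zeta(d)}.
$$

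Finally, I would substitute the closed-form volume~(\ref{sec.DPS.1.eq.1}) evaluated at $p=1$, which directly produces the claimed constant $(2\Gamma(1/q+1))^d/(2\Gamma(d/q+1)\zeta(d))$. There is no serious obstacle in this argument: the only point that needs care is verifying that the hypotheses of Lemma~\ref{sec.DPS.1.lem.1} are satisfied by $B_q(d,1)$ (trivial) and justifying the halving step from the symmetry about the origin, which relies on the fact that the origin is excluded from $\mathbb{P}^d$ so that no primitive point is its own negative.
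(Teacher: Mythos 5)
Your proposal is correct and follows essentially the same route as the paper: identify the diameter with the number of generators, observe via the central symmetry that this is half of $\left|B_q(d,p)\cap\mathbb{P}^d\right|$, and then apply Lemma~\ref{sec.DPS.1.lem.1} with $pC=B_q(d,p)$ together with the volume formula~(\ref{sec.DPS.1.eq.1}). Your explicit justification of the halving step (pairing $\{x,-x\}$ and noting $0\notin\mathbb{P}^d$) is a welcome detail that the paper leaves implicit.
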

\begin{proof}
Recall that the diameter of $H_q(d,p)$ is equal to the number of its generators; that is, to the number of the primitive points in $B_q(d,p)$ whose first non-zero coordinate is positive or, equivalently, to half the number of the primitive points contained in $B_q(d,p)$. As a consequence,
\begin{equation}\label{sec.DPS.1.thm.1.eq.1}
\frac{\delta(H_q(d,p))}{p^d}=\frac{\!\left|B_q(d,p)\cap\mathbb{P}^d\right|}{2p^d}\mbox{.}
\end{equation}

Taking $pC=B_q(d,p)$ in the statement of Lemma \ref{sec.DPS.1.lem.1} and evaluating the volume of this ball using equation (\ref{sec.DPS.1.eq.1}) yields
\begin{equation}\label{sec.DPS.1.thm.1.eq.2}
\lim_{p\to\infty}\frac{\left|B_q(d,p)\cap\mathbb{P}^d\right|}{p^d}=\frac{\left(2\Gamma\!\left(\frac{1}{q}+1\right)\right)^d}{\Gamma\!\left(\frac{d}{q}+1\right)\!\zeta(d)}\mbox{.}
\end{equation}

Combining equalities (\ref{sec.DPS.1.thm.1.eq.1}) and (\ref{sec.DPS.1.thm.1.eq.2}) completes the proof.
\end{proof}

We now turn our attention to the primitive zonotopes $H_q^+(d,p)$.

\begin{thm}\label{sec.DPS.1.thm.2}
$\displaystyle\lim_{p\to\infty}\frac{\delta\!\left(H_q^+(d,p)\right)}{p^d}=\frac{\Gamma\!\left(\frac{1}{q}+1\right)^d}{\Gamma\!\left(\frac{d}{q}+1\right)\!\zeta(d)}$.
\end{thm}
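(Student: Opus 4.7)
The plan is to mirror the argument used for Theorem~\ref{sec.DPS.1.thm.1}, adapted so that we count only primitive points in the positive orthant. First I would note that a segment from the origin to a nonzero point $(x_1,\dots,x_d)$ is contained in $[0,+\infty)^d$ if and only if all $x_i$ are non-negative, in which case the first non-zero coordinate is automatically positive. Combined with the fact that the diameter of a zonotope is the number of its generators, this identifies
$$
\delta\!\left(H_q^+(d,p)\right)=\left|B_q(d,p)\cap[0,+\infty)^d\cap\mathbb{P}^d\right|\mbox{.}
$$

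Next I would apply Lemma~\ref{sec.DPS.1.lem.1} with $C=B_q(d,1)\cap[0,+\infty)^d$. This $C$ is convex, compact, contains the origin, and has non-empty interior, so the hypotheses of the lemma are met, and $pC=B_q(d,p)\cap[0,+\infty)^d$. The lemma then yields
$$
\lim_{p\to\infty}\frac{\left|B_q(d,p)\cap[0,+\infty)^d\cap\mathbb{P}^d\right|}{\mathrm{vol}\!\left(B_q(d,p)\cap[0,+\infty)^d\right)}=\frac{1}{\zeta(d)}\mbox{.}
$$

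Finally I would use the fact that $B_q(d,p)$ is invariant under reflection through any coordinate hyperplane, so the closed orthants partition it up to a set of measure zero and $\mathrm{vol}(B_q(d,p)\cap[0,+\infty)^d)=\mathrm{vol}(B_q(d,p))/2^d$. Plugging in formula~(\ref{sec.DPS.1.eq.1}) gives
$$
\mathrm{vol}\!\left(B_q(d,p)\cap[0,+\infty)^d\right)=\frac{\Gamma\!\left(\frac{1}{q}+1\right)^{\!d}p^d}{\Gamma\!\left(\frac{d}{q}+1\right)}\mbox{,}
$$
and combining the two displays above yields the announced limit.

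There is no real obstacle: the argument is almost line-for-line the one given for Theorem~\ref{sec.DPS.1.thm.1}, with the factor of $2$ in the denominator replaced by $2^d$ to reflect the restriction to one orthant instead of two half-spaces. The one point worth checking carefully is that the lower-dimensional primitive points lying on the coordinate hyperplanes, which are shared between several orthants, contribute only $o(p^d)$ and hence do not affect the limit; this is immediate from Lemma~\ref{sec.DPS.1.lem.1} applied to $C$, since the lemma already accounts for all boundary effects.
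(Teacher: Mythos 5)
Your proof is correct, but it takes a genuinely different route from the paper's. You apply Lemma~\ref{sec.DPS.1.lem.1} directly to the quarter-ball $C=B_q(d,1)\cap[0,+\infty)^d$, which does satisfy the hypotheses as stated (convex, compact, contains the origin, non-empty interior), and then divide the volume formula (\ref{sec.DPS.1.eq.1}) by $2^d$. The paper instead only ever invokes Lemma~\ref{sec.DPS.1.lem.1} for the full ball $B_q(d,p)$: it stratifies the generators of $H_q^+(d,p)$ by the smallest face of the cone $[0,+\infty)^d$ containing them, counts the $i$-dimensional cells of the orthant subdivision of $\mathbb{R}^d$ via the faces of the cross-polytope (there are $2^i\binom{d}{i}$ of them), derives the identity $2^d\delta\!\left(H_q^+(d,p)\right)=\left|B_q(d,p)\cap\mathbb{P}^d\right|+\sum_{i=1}^{d-1}(2^d-2^i)\binom{d}{i}a_q(i,p)$, and shows the correction term is at most $d2^{d-1}\delta\!\left(H_q^+(d-1,p)\right)=o(p^d)$ by induction on $d$. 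What your approach buys is brevity; what the paper's buys is that the primitive-point density statement is only ever needed for the balls $B_q(d,p)$ themselves, the form in which it is most classically available, rather than for a convex body whose boundary contains large flat pieces of coordinate hyperplanes, where primitive points of lower-dimensional sublattices accumulate. Your closing remark correctly flags this as the one delicate point, though for your own argument no correction is actually required: the identification $\delta\!\left(H_q^+(d,p)\right)=\left|B_q(d,p)\cap[0,+\infty)^d\cap\mathbb{P}^d\right|$ is exact and involves no double counting, so the entire weight of the proof rests on Lemma~\ref{sec.DPS.1.lem.1} holding in the generality in which the paper states it.
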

\begin{proof}
Denote by $a_q(d,p)$ the number of generators of $H_q^+(d,p)$ that are not contained in any face of dimension less than $d$ of the cone $[0,+\infty[^d$. 
Consider a face $F$ of $[0,+\infty[^d$. Assume that $F$ is $i$-dimensional with $i\geq0$ and observe that there are exactly $a_q(i,p)$ generators of $H_q^+(d,p)$ contained in $F$ but not in any face of $[0,+\infty[^d$ of dimension less than $i$. Further observe that $[0,+\infty[^d$ has exactly $d\choose{i}$ faces of dimension $i$. Since the diameter of $H_q^+(d,p)$ is the number of its generators; that is $\left|B_q(d,p)\cap[0,+\infty[^d\right|$, we therefore obtain
\begin{equation}\label{sec.DPS.1.thm.2.eq.1}
\delta\!\left(H_q^+(d,p)\right)=\sum_{i=1}^d{d\choose i}a_q(i,p)\mbox{.}
\end{equation}

Now consider the $2^d$ orthants of $\mathbb{R}^d$. These orthants are polyhedral cones, and the faces of these cones collectively form a polyhedral subdivision of $\mathbb{R}^d$. The number of $i$-dimensional polyhedra in this subdivision is equal to the number of the $(i-1)$-dimensional faces of a $d$-dimensional cross-polytope; that is, $2^i{d\choose i}$. Hence, the number of primitive lattice points in $B_q(d,p)$ satisfies
\begin{equation}\label{sec.DPS.1.thm.2.eq.0}
\left|B_q(d,p)\cap\mathbb{P}^d\right|=\sum_{i=1}^d2^i{d\choose i}a_q(i,p)\mbox{.}
\end{equation}

The equalities (\ref{sec.DPS.1.thm.2.eq.1}) and (\ref{sec.DPS.1.thm.2.eq.0}) yield
\begin{equation}\label{sec.DPS.1.thm.2.eq.2}
2^d\delta\!\left(H_q^+(d,p)\right)=\left|B_q(d,p)\cap\mathbb{P}^d\right|+\sum_{i=1}^{d-1}(2^d-2^i){d\choose i}a_q(i,p)\mbox{.}
\end{equation}

Now observe that
$$
0\leq\sum_{i=1}^{d-1}(2^d-2^i){d\choose i}a_q(i,p)\leq{d2^{d-1}\sum_{i=1}^{d-1}{d-1\choose i}a_q(i,p)}\mbox{.}
$$

According to (\ref{sec.DPS.1.thm.2.eq.1}), the right-hand side of the second inequality can be expressed in terms of $\delta\!\left(H_q^+(d-1,p)\right)$. Therefore, by (\ref{sec.DPS.1.thm.2.eq.2}),
\begin{equation}\label{sec.DPS.1.thm.2.eq.3}
0\leq2^d\delta\!\left(H_q^+(d,p)\right)-\left|B_q(d,p)\cap\mathbb{P}^d\right|\leq{d2^{d-1}\delta\!\left(H_q^+(d-1,p)\right)}\mbox{.}
\end{equation}

This double inequality makes it possible to prove the theorem by induction on $d$. Indeed, under the inductive property that
$$
\displaystyle\lim_{p\to\infty}\frac{\delta\!\left(H_q^+(d-1,p)\right)}{p^d}=0\mbox{,}
$$
the combination of (\ref{sec.DPS.1.eq.1}), (\ref{sec.DPS.1.thm.2.eq.3}), and Lemma \ref{sec.DPS.1.lem.1} provides the desired result. We still need to establish the base case for the induction. Observe that $a_q(1,p)=1$. As a consequence, equalities (\ref{sec.DPS.1.thm.2.eq.1}) and (\ref{sec.DPS.1.thm.2.eq.0}) yield
$$
\left|B_q(2,p)\cap\mathbb{P}^2\right|=4\delta\!\left(H_q^+(2,p)\right)-4\mbox{.}
$$

Combining this with (\ref{sec.DPS.1.eq.1}) and Lemma \ref{sec.DPS.1.lem.1} proves the result when $d=2$.
\end{proof}

\section{Lattice zonotopes with the largest possible diameter}\label{sec.DPS.2}

For any $d$-dimensional lattice polytope $P$, we denote by $k(P)$ the smallest integer $k$ such that some translate of $P$ by a lattice vector is contained in the hypercube $[0,k]^d$. We show in this section that, for all $p$, the unique lattice polytope with diameter $\delta_z(d,k(H_1(d,p)))$ contained in the hypercube $[0,k(H_1(d,p))]^d$ is a translate of $H_1(d,p)$. Note that the primitive zonotopes $H_1(d,p)$ and the $1$-norm they are built from play an important role here.

\begin{thm}\label{sec.DPS.2.lem.2}
Consider a $d$-dimensional lattice zonotope $Z$ and a positive integer $p$. If $\delta\!\left(H_1(d,p)\right)\leq\delta(Z)$, then $k\!\left(H_1(d,p)\right)\leq{k(Z)}$. If in addition, the first of these inequalities is strict then so is the second one, and if both inequalities are equalities, then $Z$ is a translate of $H_1(d,p)$. 
\end{thm}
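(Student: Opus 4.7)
The plan is to put any candidate lattice zonotope $Z$ into a canonical Minkowski decomposition, and then compare it with $H_1(d,p)$ via a short chain of three inequalities that simultaneously controls its diameter and its smallest enclosing hypercube.

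\textbf{Canonical form of $Z$.} Since $Z$ is a $d$-dimensional lattice zonotope, each of its edges is a lattice translate of a generator, so its generators can be taken to be lattice segments. After combining parallel generators, I would write
\[
Z=t+\sum_{i=1}^{n}[0,n_iu_i],
\]
where $t\in\mathbb{Z}^d$, the vectors $u_i\in\mathbb{P}^d$ are pairwise non-parallel (chosen, without loss of generality, with positive first non-zero coordinate), and the $n_i\geq 1$ are positive integers. In this form, the fact cited in Section~\ref{sec.DPS.1} that the diameter of a zonotope equals its number of generators gives $\delta(Z)=n$, while the extent of $Z$ in direction $e_j$ is the integer $\sum_i n_i|u_{i,j}|$, so that $k(Z)=\max_{1\leq j\leq d}\sum_{i=1}^{n}n_i|u_{i,j}|$.

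\textbf{Symmetry in $H_1(d,p)$ and the key chain.} Let $S$ denote the generator set of $H_1(d,p)$, so that $|S|=\delta(H_1(d,p))$ and $S\cup(-S)=\mathbb{P}^d\cap B_1(d,p)$. Since the right-hand side is invariant under coordinate permutations, $\sum_{u\in S}|u_j|$ does not depend on $j$; averaging over $j$ yields $k(H_1(d,p))=\frac{1}{d}\sum_{u\in S}\|u\|_1$. Combining a max/average bound, the bound $n_i\geq 1$, and a minimality property of $S$, I would then chain
\[
k(Z)\geq\frac{1}{d}\sum_{i=1}^{n}n_i\|u_i\|_1\geq\frac{1}{d}\sum_{i=1}^{n}\|u_i\|_1\geq\frac{1}{d}\sum_{u\in S}\|u\|_1=k(H_1(d,p)).
\]
The third inequality is the crux: $S$ is exactly the set of primitive vectors (with positive first non-zero coordinate) of $\ell_1$-norm at most $p$, and any primitive vector outside $S$ has integer $\ell_1$-norm at least $p+1$; therefore among any set $T$ of $n\geq|S|$ such vectors, $\sum_{u\in T}\|u\|_1$ is at least $\sum_{u\in S}\|u\|_1$, with equality if and only if $n=|S|$ and $T=S$.

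\textbf{Strictness and the uniqueness statement.} If $\delta(H_1(d,p))<\delta(Z)$, then $n>|S|$ makes the third inequality strict, giving $k(H_1(d,p))<k(Z)$. If instead both inequalities in the theorem are equalities, every step of the chain collapses to an equality: the third forces $\{u_1,\dots,u_n\}=S$, the second forces $n_i=1$ for every $i$, and the first then holds automatically by the symmetry already used for $S$. Hence $Z$ and $H_1(d,p)$ have identical generator sets, which makes $Z$ a translate of $H_1(d,p)$.

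\textbf{Main obstacle.} The most delicate point is the third inequality, which rests on the characterization of $S$ as the collection of primitive lattice vectors of smallest $\ell_1$-norm. This is precisely where the $1$-norm plays a privileged role over other $q$-norms, and it is what restricts the uniqueness conclusion to $H_1(d,p)$ instead of applying to a general $H_q(d,p)$.
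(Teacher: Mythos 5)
Your proof is correct and follows essentially the same route as the paper's: both arguments rest on the permutation symmetry giving $k\!\left(H_1(d,p)\right)d=\sum_{g}\|g\|_1$ over the generators $g$ of $H_1(d,p)$, the bound $\sum_{z}\|z\|_1\leq k(Z)d$ over the generators $z$ of $Z$, and the fact that the generators of $H_1(d,p)$ are precisely the cheapest primitive directions in the $1$-norm. The paper packages this last point as a collinearity-respecting injection from the generators of $H_1(d,p)$ into those of $Z$ rather than your decomposition into $n_iu_i$ followed by an exchange argument, but the content is the same.
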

\begin{proof}
Denote by $\mathcal{Z}$ the set of the generators of $Z$ and by $\mathcal{G}$ the set of the generators of $H_1(d,p)$. We will assume that the generators of $Z$ are all incident to the origin of $\mathbb{R}^d$ and that the first non-zero coordinate of their other vertex is positive. This can be done without loss of generality by translating the generators of $Z$ or equivalently, by translating $Z$ itself. Recall that the diameter of a zonotope is equal to the number of its generators. Therefore, if the diameter of $H_1(d,p)$ is not greater than that of $Z$, then there exists an injection $\psi$ from $\mathcal{G}$ into $\mathcal{Z}$. Since distinct generators of a zonotope are never collinear, we can require this injection to be such that, if a generator $z$ of $Z$ is collinear to a generator $g$ of $H_1(d,p)$, then $\psi(g)$ is equal to $z$. It follows from this assumption that the $1$-norm of a generator $g$ of $H_1(d,p)$ is never greater than the $1$-norm of $\psi(g)$. Moreover, by construction, if these $1$-norms are equal, then $g$ and $\psi(g)$ must coincide. Now observe that the $1$-norms of the generators of $Z$ sum to at most $k(Z)d$ and, since $H_1(d,p)$ is invariant up to translation by the isometries of $\mathbb{R}^d$ that consist in permuting coordinates, the $1$-norms of the generators of $H_1(d,p)$ sum to exactly $k(H_1(d,p))d$. As a consequence,
$$
k\!\left(H_1(d,p)\right)\!d\leq{k(Z)}d\mbox{,}
$$
and this inequality is strict when the number of generators of $Z$ is greater than that of $H_1(d,p)$. Dividing this inequality by $d$ provides the first part of the theorem. Now observe that, if $\delta\!\left(H_1(d,p)\right)=\delta(Z)$, then $\psi$ must be a bijection. If in addition, $k\!\left(H_1(d,p)\right)=k(Z)$, the $1$-norm of a generator of $H_1(d,p)$ is necessarily equal to the $1$-norm of its image by $\psi$. In this case, $Z$ and $H_1(d,p)$ have exactly the same generators and they must coincide.
\end{proof}

Theorem \ref{sec.DPS.2.lem.2} will be one of the main ingredients in the proof of Theorem~\ref{DPS.thm.main.1}. It also admits the following consequence, announced above.

\begin{cor}\label{sec.DPS.2.cor.0}
The unique lattice zonotope of diameter $\delta_z(d,k(H_1(d,p)))$ contained in the hypercube $[0,k(H_1(d,p))]^d$ is a translate of $H_1(d,p)$.
\end{cor}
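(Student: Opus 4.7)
The plan is to derive this corollary directly from Theorem \ref{sec.DPS.2.lem.2}, essentially by a sandwich/contradiction argument. Set $k^\star=k(H_1(d,p))$ for brevity. First I would record the easy bound: by the very definition of $k(H_1(d,p))$, some lattice translate of $H_1(d,p)$ is contained in $[0,k^\star]^d$, so $\delta_z(d,k^\star)\geq\delta(H_1(d,p))$.

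Next, let $Z$ be any lattice zonotope contained in $[0,k^\star]^d$ with $\delta(Z)=\delta_z(d,k^\star)$. Because $Z$ itself is a lattice translate of $Z$ lying in $[0,k^\star]^d$, we have $k(Z)\leq k^\star$. I would then rule out the strict case $\delta(Z)>\delta(H_1(d,p))$: by the strict-inequality clause of Theorem \ref{sec.DPS.2.lem.2}, this would force $k(H_1(d,p))<k(Z)$, i.e.\ $k^\star<k(Z)\leq k^\star$, which is impossible.

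Consequently $\delta(Z)=\delta(H_1(d,p))$, and in particular $\delta_z(d,k^\star)$ equals $\delta(H_1(d,p))$. The first clause of Theorem \ref{sec.DPS.2.lem.2} then gives $k^\star=k(H_1(d,p))\leq k(Z)$, and combined with the reverse inequality above we obtain $k(Z)=k^\star$. The equality clause of Theorem \ref{sec.DPS.2.lem.2} now applies and forces $Z$ to be a translate of $H_1(d,p)$, which yields both the existence and the uniqueness asserted by the corollary.

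There is no real obstacle here; the work has already been done in Theorem \ref{sec.DPS.2.lem.2}. The only minor subtlety to check is the trivial fact that $Z\subseteq[0,k^\star]^d$ implies $k(Z)\leq k^\star$, which holds because the zero vector is a lattice vector and $Z$ is a lattice polytope, so $Z$ qualifies as its own lattice translate witnessing the bound.
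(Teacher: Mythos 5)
Your argument follows essentially the same route as the paper: both proofs apply Theorem \ref{sec.DPS.2.lem.2} three times (once per clause) to a maximizer $Z$, using $k(Z)\leq k^\star$ from containment and $\delta(H_1(d,p))\leq\delta(Z)$ from maximality to squeeze out $k(Z)=k^\star$ and $\delta(Z)=\delta(H_1(d,p))$, and then invoke the equality clause to conclude that $Z$ is a translate of $H_1(d,p)$.

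There is, however, one small gap at the very end. Showing that \emph{every} maximizer is a translate of $H_1(d,p)$ does not by itself yield the uniqueness asserted in the corollary: a priori two distinct lattice translates of $H_1(d,p)$ could both sit inside $[0,k^\star]^d$, and each would be a maximizer. The paper closes this by observing that $H_1(d,p)$ is invariant (up to translation) under the coordinate-permuting isometries of $\mathbb{R}^d$, which forces its bounding box to be the cube $[0,k^\star]^d$ itself in every direction, so exactly one lattice translate fits. You should add this final step; everything else in your proposal is correct and matches the paper's argument.
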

\begin{proof}
Consider a lattice zonotope $Z$ with diameter $\delta_z(d,k(H_1(d,p)))$ contained in the hypercube $[0,k(H_1(d,p))]^d$. In particular,
\begin{equation}\label{sec.DPS.2.cor.0.eq.1}
k(Z)\leq{k(H_1(d,p))}\mbox{.}
\end{equation}

By definition, $H_1(d,p)$ is a lattice zonotope contained, up to translation in $[0,k(H_1(d,p))]^d$. Therefore, its diameter must be at most $\delta_z(d,k(H_1(d,p)))$. In other words, $\delta(H_1(d,p))\leq\delta(Z)$. Hence, by Theorem \ref{sec.DPS.2.lem.2}, 
\begin{equation}\label{sec.DPS.2.cor.0.eq.2}
k(H_1(d,p))\leq{k(Z)}\mbox{.}
\end{equation}

According to (\ref{sec.DPS.2.cor.0.eq.1}) and (\ref{sec.DPS.2.cor.0.eq.2}), $k(H_1(d,p))$ and $k(Z)$ coincide. Since the diameter of $H_1(d,p)$ is not greater than that of $Z$, it therefore follows from Theorem \ref{sec.DPS.2.lem.2} that these diameters also coincide. Invoking Theorem \ref{sec.DPS.2.lem.2} a third time, we obtain that $Z$ is a translate of $H_1(d,p)$. It remains to show that there is only one translate of $H_1(d,p)$ contained in the hypercube $[0,k(H_1(d,p))]^d$. This is an immediate consequence of $H_1(d,p)$ being invariant up to translation by the isometries of $\mathbb{R}^d$ that consist in permuting coordinates.
\end{proof}

Corollary \ref{sec.DPS.2.cor.0} provides a way to determine $\delta_z(d,k)$ when, for some integer $p$, $k$ coincides with $k\!\left(H_1(d,p)\right)$. This partially answers a question posed by G{\"u}nter Rote. Indeed, in this case, it follows from Corollary~\ref{sec.DPS.2.cor.0} that it suffices count the points in $\mathbb{P}^d\cap{B_1(d,p)}$ whose first non-zero coordinate is positive. 

\section{An upper bound on the diameter of lattice zonotopes}\label{sec.DPS.3}

This section is devoted to proving Theorem \ref{DPS.thm.main.1}. In order to do that, we first relate $k\!\left(H_1(d,p)\right)$ and $\delta\!\left(H_1(d,p)\right)$ as follows.

\begin{lem}\label{sec.DPS.2.lem.1}
$\displaystyle{k\!\left(H_1(d,p)\right)\!d=p\delta\!\left(H_1(d,p)\right)-\sum_{i=0}^{p-1}\delta\!\left(H_1(d,i)\right)}$.
\end{lem}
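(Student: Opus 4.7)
The plan is to reduce the identity to a straightforward summation-by-parts computation, after re-interpreting both sides combinatorially in terms of primitive lattice points.

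First I would observe that the quantity $k(H_1(d,p))d$ has already been identified (in the proof of Theorem \ref{sec.DPS.2.lem.2}) as the sum of the $1$-norms of the generators of $H_1(d,p)$. Concretely, if $\mathcal{G}$ denotes the set of those generators, placed so that one endpoint of each is the origin and the other is a primitive point in $B_1(d,p)$ with positive first non-zero coordinate, then by the symmetry of $\mathbb{P}^d\cap B_1(d,p)$ under coordinate permutations (combined with the pairing $v\leftrightarrow -v$), the sum $\sum_{g\in\mathcal{G}}|g_j|$ takes the same value $S$ for every coordinate index $j$. Since the translate $\sum_{g\in\mathcal{G}}[0,g]$ of $H_1(d,p)$ lies exactly in $[0,S]^d$ and touches both parallel facets in each coordinate direction, one has $S=k(H_1(d,p))$, so that summing over $j$ gives
$$
k(H_1(d,p))\,d=\sum_{g\in\mathcal{G}}\|g\|_1.
$$

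Next I would stratify the generators by $1$-norm. For every integer $i\geq 1$, the number of generators of $H_1(d,p)$ with $1$-norm exactly $i$ equals the number of primitive points with positive first non-zero coordinate in $B_1(d,i)\setminus B_1(d,i-1)$, which by the definition of $H_1(d,\cdot)$ equals $\delta(H_1(d,i))-\delta(H_1(d,i-1))$. Since no primitive point has $1$-norm $0$, we also have $\delta(H_1(d,0))=0$. Writing $N(i):=\delta(H_1(d,i))$, this gives
$$
\sum_{g\in\mathcal{G}}\|g\|_1=\sum_{i=1}^{p}i\bigl(N(i)-N(i-1)\bigr).
$$

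Finally I would apply Abel's summation formula to the right-hand side. Reindexing the second sum and telescoping yields
$$
\sum_{i=1}^{p}i\bigl(N(i)-N(i-1)\bigr)=pN(p)-\sum_{i=1}^{p-1}N(i)=pN(p)-\sum_{i=0}^{p-1}N(i),
$$
where the last equality uses $N(0)=0$. Combining the three displays produces the claimed identity.

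I do not expect any serious obstacle; the computation is essentially routine summation by parts. The only delicate point is the first step, where one must carefully justify that the common value $S$ of the coordinate projections equals $k(H_1(d,p))$, which rests on the invariance of $H_1(d,p)$ under coordinate permutations (up to translation) and on the fact that pairing generators with their negatives restores the full symmetric set $\mathbb{P}^d\cap B_1(d,p)$.
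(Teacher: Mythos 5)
Your proof is correct and follows essentially the same route as the paper's: identify $k(H_1(d,p))\,d$ with the sum of the $1$-norms of the generators via the permutation symmetry of $H_1(d,p)$, count the generators of each $1$-norm $i$ as $\delta(H_1(d,i))-\delta(H_1(d,i-1))$, and finish by summation by parts. The extra care you take in justifying the symmetry step is a welcome elaboration of what the paper states more briefly.
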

\begin{proof}
Recall that the diameter of $H_1(d,p)$ is the number of primitive lattice points in $B_1(d,p)$ whose first non-zero coordinate is positive. Hence, when $1\leq{i}\leq{p}$, the number of these points whose $1$-norm is equal to $i$ is
$$
\delta\!\left(H_1(d,i)\right)-\delta\!\left(H_1(d,i-1)\right)\mbox{.}
$$

By symmetry, the $1$-norms of the primitive lattice points in $B_1(d,p)$ whose first non-zero coordinate is positive sum to $k(H_1(d,p))d$. As a consequence,
$$
k(H_1(d,p))d=\sum_{i=1}^pi\!\left[\delta\!\left(H_1(d,i)\right)-\delta\!\left(H_1(d,i-1)\right)\right]\mbox{.}
$$

Rearranging the right-hand side of this equality completes the proof.
\end{proof}

The following theorem provides the asymptotic behavior of $k\!\left(H_1(d,p)\right)$.

\begin{thm}\label{thm.DPS.1}
$\displaystyle\lim_{p\to\infty}\frac{k\!\left(H_1(d,p)\right)}{p^{d+1}}=\frac{2^{d-1}}{(d+1)!\zeta(d)}$.
\end{thm}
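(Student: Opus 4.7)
The plan is to derive the limit directly from Lemma~\ref{sec.DPS.2.lem.1} by invoking Theorem~\ref{sec.DPS.1.thm.1} specialized to $q=1$. Setting $q=1$ in that theorem gives
$$
\lim_{p\to\infty}\frac{\delta(H_1(d,p))}{p^d}=\frac{(2\Gamma(2))^d}{2\Gamma(d+1)\zeta(d)}=\frac{2^{d-1}}{d!\,\zeta(d)}\mbox{,}
$$
so, denoting $L=2^{d-1}/(d!\,\zeta(d))$, the leading term $p\,\delta(H_1(d,p))$ in Lemma~\ref{sec.DPS.2.lem.1} behaves like $L\,p^{d+1}$.

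The only genuine work is to handle the subtracted sum $S(p)=\sum_{i=0}^{p-1}\delta(H_1(d,i))$. The goal is to show that $S(p)/p^{d+1}\to L/(d+1)$. This is a standard Cesàro/Riemann sum argument: fix $\varepsilon>0$ and pick $N$ such that $(L-\varepsilon)i^d\leq\delta(H_1(d,i))\leq(L+\varepsilon)i^d$ for all $i\geq N$. The tail $\sum_{i=N}^{p-1}i^d$ is squeezed between $\int_N^{p-1}t^d\,dt$ and $\int_N^p t^d\,dt$, both equivalent to $p^{d+1}/(d+1)$; the initial terms $\sum_{i=0}^{N-1}\delta(H_1(d,i))$ contribute $O(1)$ and vanish after dividing by $p^{d+1}$. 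Hence $S(p)\sim L p^{d+1}/(d+1)$.

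Combining these two asymptotics, Lemma~\ref{sec.DPS.2.lem.1} yields
$$
d\,k(H_1(d,p)) = p\,\delta(H_1(d,p)) - S(p) \sim L\,p^{d+1}\left(1-\frac{1}{d+1}\right) = \frac{d\,L}{d+1}\,p^{d+1}\mbox{,}
$$
so dividing by $d$ gives $k(H_1(d,p))/p^{d+1}\to L/(d+1)=2^{d-1}/((d+1)!\,\zeta(d))$, as desired.

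The only real obstacle is the Cesàro step, and it is not really an obstacle: it is the routine observation that an asymptotic $a_i\sim L i^d$ transfers to $\sum_{i<p}a_i\sim L p^{d+1}/(d+1)$. Everything else is mechanical manipulation of the formula from Lemma~\ref{sec.DPS.2.lem.1} and the explicit value of the constant coming from Theorem~\ref{sec.DPS.1.thm.1}.
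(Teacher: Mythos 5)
Your proposal is correct and follows essentially the same route as the paper: both proofs combine Lemma~\ref{sec.DPS.2.lem.1} with the $q=1$ case of Theorem~\ref{sec.DPS.1.thm.1} and then justify that $\sum_{i<p}\delta(H_1(d,i))\sim Lp^{d+1}/(d+1)$. The only difference is the technical device for that Cesàro step (your fixed-$N$ splitting with integral comparison versus the paper's decreasing error function $\varepsilon$, Faulhaber's formula, and a split of the error sum at $\sqrt{p}$), which does not change the substance of the argument.
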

\begin{proof}
First observe that, according to Theorem \ref{sec.DPS.1.thm.1},
\begin{equation}\label{thm.DPS.1.eq.1}
\left|\frac{\delta(H_1(d,p))}{p^d}-\frac{2^{d-1}}{d!\zeta(d)}\right|\leq\varepsilon(p)\mbox{,}
\end{equation}
where $\varepsilon:\mathbb{N}\rightarrow\mathbb{R}$ is a function such that
$$
\lim_{p\to\infty}\varepsilon(p)=0\mbox{.}
$$

We can further assume without loss of generality that $\varepsilon$ is decreasing. Invoking Lemma \ref{sec.DPS.2.lem.1}, and using (\ref{thm.DPS.1.eq.1}), one obtains
\begin{equation}\label{thm.DPS.1.eq.3}
\left|\frac{k\!\left(H_1(d,p)\right)\!d}{p^{d+1}}-\frac{2^{d-1}}{d!\zeta(d)}\left(1-\frac{1}{p^{d+1}}\sum_{i=1}^{p-1}i^d\right)\right|\leq{\varepsilon(p)+\frac{1}{p^{d+1}}\sum_{i=1}^{p-1}\varepsilon(i)i^d}
\end{equation}

However, by Faulhaber's formula,
\begin{equation}\label{lem.DPS.0.eq.3}
\sum_{i=1}^{p-1}i^d=\frac{1}{d+1}p^{d+1}+N(p)\mbox{,}
\end{equation}
where $N(p)$ is a polynomial of degree at most $d$ in $p$. Therefore,
$$
\lim_{p\to\infty}\frac{2^{d-1}}{d!\zeta(d)}\!\left(1-\frac{1}{p^{d+1}}\sum_{i=1}^{p-1}i^d\right)=\frac{d2^{d-1}}{(d+1)!\zeta(d)}\mbox{.}
$$

Now observe that
$$
\lim_{p\to\infty}\frac{1}{p^{d+1}}\sum_{i=1}^{p-1}\varepsilon(i)i^d=0\mbox{.}
$$

Indeed,
$$
\frac{1}{p^{d+1}}\sum_{i=1}^{\lfloor\sqrt{p}\rfloor}\varepsilon(i)i^d\leq\frac{\varepsilon(1)}{p^{(d+1)/2}}\mbox{,}
$$
and
$$
\frac{1}{p^{d+1}}\sum_{i={\lceil\!\sqrt{p}\rceil}}^{p-1}\varepsilon(i)i^d\leq\varepsilon(\lceil\!\sqrt{p}\rceil)\mbox{.}
$$

Hence, letting $p$ go to infinity in (\ref{thm.DPS.1.eq.3}) provides the desired limit.
\end{proof}

The following result is a consequence of Theorems \ref{sec.DPS.1.thm.1} and \ref{thm.DPS.1}. It provides the exact asymptotic behavior of the diameter of $H_1(d,p)$ in terms of $k\!\left(H_1(d,p)\right)$ when $d$ is fixed and $p$ goes to infinity.

\begin{cor}\label{cor.DPS.1}
$\displaystyle\lim_{p\to\infty}\frac{\delta\!\left(H_1(d,p)\right)^{d+1}}{k\!\left(H_1(d,p)\right)^d}=\frac{2^d(d+1)^d}{2\,d!\zeta(d)}$.
\end{cor}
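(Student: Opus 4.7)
The plan is to combine the two asymptotic formulas already in hand. Specializing Theorem \ref{sec.DPS.1.thm.1} to $q=1$ (so $\Gamma(1/q+1)=\Gamma(2)=1$ and $\Gamma(d/q+1)=d!$) produces
$$
\delta(H_1(d,p))\sim\frac{2^{d-1}}{d!\,\zeta(d)}\,p^d,
$$
and Theorem \ref{thm.DPS.1} gives
$$
k(H_1(d,p))\sim\frac{2^{d-1}}{(d+1)!\,\zeta(d)}\,p^{d+1}.
$$

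The key observation is that the exponents on $p$ match after the appropriate powers are taken. I would raise the first asymptotic to the power $d+1$ and the second to the power $d$; both exponents on $p$ equal $d(d+1)$, so they cancel in the ratio $\delta(H_1(d,p))^{d+1}/k(H_1(d,p))^d$, leaving
$$
\lim_{p\to\infty}\frac{\delta(H_1(d,p))^{d+1}}{k(H_1(d,p))^d}
=\frac{\left(2^{d-1}/(d!\,\zeta(d))\right)^{d+1}}{\left(2^{d-1}/((d+1)!\,\zeta(d))\right)^d}.
$$

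The remaining work is purely algebraic. The $2^{d-1}$ factors combine to give $2^{(d-1)(d+1)-(d-1)d}=2^{d-1}$, and one factor of $\zeta(d)$ survives in the denominator. Using $(d+1)!=(d+1)\cdot d!$ one has $((d+1)!)^d/(d!)^{d+1}=(d+1)^d/d!$, so the right-hand side simplifies to $2^{d-1}(d+1)^d/(d!\,\zeta(d))$, which is exactly $2^d(d+1)^d/(2\,d!\,\zeta(d))$. There is no real obstacle here; the only thing to be careful about is correctly balancing the factorials and powers of $2$, and checking that the $p$-exponents cancel as claimed.
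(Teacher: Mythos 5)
Your proposal is correct and follows exactly the paper's own argument: both raise the limit from Theorem \ref{sec.DPS.1.thm.1} (with $q=1$) to the power $d+1$, raise the limit from Theorem \ref{thm.DPS.1} to the power $d$, and divide; the algebraic simplification to $2^d(d+1)^d/(2\,d!\,\zeta(d))$ checks out.
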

\begin{proof}
By Theorem \ref{sec.DPS.1.thm.1},
$$
\lim_{p\to\infty}\frac{\delta\!\left(H_1(d,p)\right)^{d+1}}{p^{d(d+1)}}=\!\left(\frac{2^{d-1}}{d!\zeta(d)}\right)^{d+1}\mbox{,}
$$
and by Theorem \ref{thm.DPS.1},
$$
\lim_{p\to\infty}\frac{k\!\left(H_1(d,p)\right)^d}{p^{d(d+1)}}=\!\left(\frac{2^{d-1}}{(d+1)!\zeta(d)}\right)^d\mbox{.}
$$

Combining these equalities provides the desired result.
\end{proof}

Note that, for any fixed $d$, $\delta(d,k)$ is an increasing function of $k$. Therefore, Corollary \ref{DPS.cor.main.1} is immediately obtained from Corollary \ref{cor.DPS.1}.

We are finally ready to prove Theorem \ref{DPS.thm.main.1}.

\begin{proof}[Proof of Theorem \ref{DPS.thm.main.1}]
Consider a lattice zonotope $Z$ contained in the hypercube $[0,k]^d$, where $k$ is positive. We can assume without loss of generality that the diameter of $Z$ is not less than the diameter of $H_1(d,1)$. Since
$$
\lim_{p\to\infty}\delta\!\left(H_1(d,p)\right)=+\infty\mbox{,}
$$
there exists a non-negative integer $p$ such that
$$
\delta\!\left(H_1(d,p)\right)\leq\delta(Z)\leq\delta\!\left(H_1(d,p+1)\right)\mbox{.}
$$

According to the first inequality and to Theorem \ref{sec.DPS.2.lem.2},
$$
k\!\left(H_1(d,p)\right)\leq{k}\mbox{.}
$$

Therefore, it follows from the second inequality that
\begin{equation}\label{thm.DPS.2.eq.2}
\frac{\delta(Z)}{k^\frac{d}{d+1}}\leq\frac{\delta\!\left(H_1(d,p+1)\right)}{k\!\left(H_1(d,p)\right)^\frac{d}{d+1}}\mbox{.}
\end{equation}

By Corollary \ref{cor.DPS.1}, 
$$
\lim_{p\to\infty}\frac{\delta\!\left(H_1(d,p)\right)}{k\!\left(H_1(d,p)\right)^\frac{d}{d+1}}=\!\left(\frac{2^d(d+1)^d}{2\,d!\zeta(d)}\right)^\frac{1}{d+1}\mbox{,}
$$
and by Theorem \ref{sec.DPS.1.thm.1},
$$
\lim_{p\to\infty}\frac{\delta\!\left(H_1(d,p+1)\right)}{\delta\!\left(H_1(d,p)\right)}=1\mbox{.}
$$

Therefore, the right-hand side of (\ref{thm.DPS.2.eq.2}) satisfies
\begin{equation}\label{thm.DPS.2.eq.3}
\lim_{p\to\infty}\frac{\delta\!\left(H_1(d,p+1)\right)}{k\!\left(H_1(d,p)\right)^\frac{d}{d+1}}=\!\left(\frac{2^d(d+1)^d}{2\,d!\zeta(d)}\right)^\frac{1}{d+1}\mbox{.}
\end{equation}

By construction, when $k$ goes to infinity, so does $p$. Hence, combining inequality (\ref{thm.DPS.2.eq.2}) with equation (\ref{thm.DPS.2.eq.3}) provides the desired result.
\end{proof}

\noindent{\bf Acknowledgements.} We thank G{\"u}nter Rote, whose question concerning the computation of the diameter of lattice zonotopes was a starting point for this paper, and Imre B\'{a}r\'{a}ny for pointing out reference \cite{KranakisPocchiola1994}. Antoine Deza is partially supported by the Natural Sciences and Engineering Research Council of Canada Discovery Grant program (RGPIN-2015-06163). Lionel Pournin is partially supported by the ANR project SoS (Structures on Surfaces), grant number ANR-17-CE40-0033 and by the PHC project number 42703TD. Noriyoshi Sukegawa is partially supported by the Japan Society for the Promotion of Science (JSPS) Grant-in-Aid for Science Research (A) 26242027.

\bibliography{LatticeZonotopes}

\providecommand{\MR}{\relax\ifhmode\unskip\space\fi MR }
\providecommand{\MRhref}[2]{%
  \href{http://www.ams.org/mathscinet-getitem?mr=#1}{#2}
}
\providecommand{\href}[2]{#2}
\begin{thebibliography}{10}

\bibitem{AcketaZunic1995}
Dragan Acketa and Jovi\v{s}a \v{Z}uni\'{c}, \textsl{On the maximal number of
  edges of convex digital polygons included into an $m\times{m}$-grid}, Journal
  of Combinatorial Theory A \textbf{69} (1995), 358--368.

\bibitem{Andrews1963}
George~E. Andrews, \textsl{A lower bound for the volume of strictly convex
  bodies with many boundary lattice points}, Transactions of the American
  Mathematical Society \textbf{106} (1963), 270--279.

\bibitem{BaranyKantor2000}
Imre B{\'a}r{\'a}ny and Jean-Michel Kantor, \textsl{On the number of lattice
  free polytopes}, European Journal of Combinatorics \textbf{21} (2000), no.~1,
  103--110.

\bibitem{BaranyLarman1998}
Imre B{\'a}r{\'a}ny and David~G. Larman, \textsl{The convex hull of the integer
  points in a large ball}, Mathematische Annalen \textbf{312} (1998), no.~1,
  167--181.

\bibitem{BaranyMartinNaslundRobins2015}
Imre B{\'a}r{\'a}ny, Greg Martin, Eric Naslund, and Sinai Robins,
  \textsl{Primitive points in lattice polygons}, arXiv:1509.02201 (2015).

\bibitem{BaranyProdromou2006}
Imre B{\'a}r{\'a}ny and Maria Prodromou, \textsl{On maximal convex lattice
  polygons inscribed in a plane convex set}, Israel Journal of Mathematics
  \textbf{154} (2006), 337--360.

\bibitem{BeckRobins2015}
Matthias Beck and Sinai Robins, \textsl{Computing the continuous discretely},
  Undergraduate Texts in Mathematics, Springer, 2015.

\bibitem{BlancoSantos2018}
M{\'o}nica Blanco and Francisco Santos, \textsl{Enumeration of lattice
  $3$-polytopes by their number of lattice points}, Discrete \& Computational
  Geometry \textbf{60} (2018), 756--800.

\bibitem{BlancoSantos2019}
M{\'o}nica Blanco and Francisco Santos, \textsl{Non-spanning lattice
  $3$-polytopes}, Journal of Combinatorial Theory A \textbf{161} (2019),
  112--133.

\bibitem{BrionVergne1997}
Michel Brion and Mich{\`e}le Vergne, \textsl{Lattice points in simple
  polytopes}, Journal of the American Mathematical Society \textbf{10} (1997),
  no.~2, 371--392.

\bibitem{DelPiaMichini2016}
Alberto {D}el {P}ia and Carla Michini, \textsl{On the diameter of lattice
  polytopes}, Discrete \& Computational Geometry \textbf{55} (2016), 681--687.

\bibitem{DezaManoussakisOnn2018}
Antoine Deza, George Manoussakis and Shmuel Onn, \textsl{Primitive zonotopes},
  Discrete \& Computational Geometry \textbf{60} (2018), 27--39.

\bibitem{DezaPournin2018}
Antoine Deza and Lionel Pournin, \textsl{Improved bounds on the diameter of
  lattice polytopes}, Acta Mathematica Hungarica \textbf{154} (2018), 457--469.

\bibitem{HardyWright1938}
Godfrey~H. Hardy and Edward~M. Wright, \textsl{An introduction to the theory of
  numbers}, Clarendon Press, Oxford, 1938.

\bibitem{KalaiKleitman1992}
Gil Kalai and Daniel Kleitman, \textsl{A quasi-polynomial bound for the
  diameter of graphs of polyhedra}, Bulletin of the American Mathematical
  Society \textbf{26} (1992), 315--316.

\bibitem{Kantor1999}
Jean-Michel Kantor, \textsl{On the width of lattice-free simplices}, Compositio
  Mathematica \textbf{118} (1999), no.~3, 235--241.

\bibitem{KleinschmidtOnn1992}
Peter Kleinschmidt and Shmuel Onn, \textsl{On the diameter of convex
  polytopes}, Discrete Mathematics \textbf{102} (1992), 75--77.

\bibitem{KranakisPocchiola1994}
Evangelos Kranakis and Michel Pocchiola, \textsl{Counting problems relating to
  a theorem of {Dirichlet}}, Computational Geometry \textbf{4} (1994),
  309--325.

\bibitem{LagariasZiegler1991}
Jeffrey~C. Lagarias and G{\"u}nter~M. Ziegler, \textsl{Bounds for lattice
  polytopes containing a fixed number of interior points in a sublattice},
  Canadian Journal of Mathematics \textbf{43} (1991), 1022--1035.

\bibitem{Naddef1989}
Dennis Naddef, \textsl{The {H}irsch conjecture is true for $(0,1)$-polytopes},
  Mathematical Programming \textbf{45} (1989), 109--110.

\bibitem{NillZiegler2011}
Benjamin Nill and G{\"u}nter~M. Ziegler, \textsl{Projecting lattice polytopes
  without interior lattice points}, Mathematics of Operations Research
  \textbf{36} (2011), 462--467.

\bibitem{Nymann1970}
James~E. Nymann, \textsl{On the probability that $k$ positive integers are
  relatively prime}, Journal of Number Theory \textbf{4} (1972), 469--473.

\bibitem{Santos2012}
Francisco Santos, \textsl{A counterexample to the {Hirsch} conjecture}, Annals
  of Mathematics \textbf{176} (2012), 383--412.

\bibitem{SantosValino2018}
Francisco Santos and {\'O}scar~Iglesias Vali{\~n}o, \textsl{Classification of
  empty lattice 4-simplices of width larger than two}, Transactions of the
  American Mathematical Society \textbf{371} (2019), 6605--6625.

\bibitem{Sebo1999}
Andr{\'a}s Seb\H{o}, \textsl{An introduction to empty simplices}, Proceedings
  of IPCO 7, Lecture Notes in Computer Science, vol. 1610, 1999, pp.~400--414.

\bibitem{Sukegawa2019}
Noriyoshi Sukegawa, \textsl{An asymptotically improved upper bound on the
  diameter of polyhedra}, Discrete \& Computational Geometry, \emph{to appear}.

\bibitem{Sukegawa2017}
Noriyoshi Sukegawa, \textsl{Improving bounds on the diameter of a polyhedron in
  high dimensions}, Discrete Mathematics \textbf{340} (2017), 2134--2142.

\bibitem{Thiele1991}
Torsten Thiele, \textsl{Extremalprobleme f\"{u}r {P}unktmengen}, Diplomarbeit,
  Freie Universit\"{a}t Berlin (1991).

\bibitem{Ziegler1995}
G{\"u}nter~M. Ziegler, \textsl{Lectures on polytopes}, Graduate Texts in
  Mathematics, vol. 152, Springer, 1995.

\end{thebibliography}
\bibliographystyle{ijmart}

\end{document}